\title{\textbf{\Large Existence of Invariant Measures for Reflected SPDEs}}
\author{Jasdeep Kalsi \footnote{\href{mailto:jasdeep.kalsi@maths.ox.ac.uk}{jasdeep.kalsi@maths.ox.ac.uk}}.\\\\
Mathematical Institute, University of Oxford\\\\}
\date{\today}
\newtheorem{thm}{Theorem}[section]
\newtheorem{cor}[thm]{Corollary}
\newtheorem{defn}[thm]{Definition}
\newtheorem{prop}[thm]{Proposition}
\newlength{\bibitemsep}\setlength{\bibitemsep}{.2\baselineskip plus .05\baselineskip minus .05\baselineskip}
\newlength{\bibparskip}\setlength{\bibparskip}{0pt}
\let\oldthebibliography\thebibliography
\renewcommand\thebibliography[1]{%
  \oldthebibliography{#1}%
  \setlength{\parskip}{\bibitemsep}%
  \setlength{\itemsep}{\bibparskip}%
}
\titleformat{\section}[block]{\sffamily\Large\bfseries\filcenter}{\thesection}{1em}{}
\numberwithin{equation}{section}
\newcommand{\xRightarrow}[2][]{\ext@arrow 0359\Rightarrowfill@{#1}{#2}}
\begin{document}
\maketitle
\vspace{-2em}
\begin{abstract}
In this article, we close a gap in the literature by proving existence of invariant measures for reflected SPDEs with only one reflecting barrier. This is done by arguing that the sequence $(u(t,\cdot))_{t \geq 0}$ is tight in the space of probability measures on continuous functions and invoking the Krylov-Bogolyubov theorem. As we no longer have an a priori bound on our solution as in the two-barrier case, a key aspect of the proof is the derivation of a suitable $L^p$ bound which is uniform in time. 
\end{abstract}

\section{Introduction and Statement of Theorem}

The aim of this paper is to argue existence of invariant measures for reflected SPDEs of the form 

\begin{equation}\label{ref_SHE}
\frac{\partial u}{\partial t}= \Delta u + f(x,u(t,x))+ \sigma(x,u(t,x))\frac{\partial^2 W}{\partial t \partial x} + \eta,
\end{equation}
where $(t,x) \in [0,\infty) \times [0,1]$, $u$ satisfies Dirichlet boundary conditions $u(t,0)=u(t,1)=0$ and the initial initial data $u_0$ lies in the space $C_0((0,1))^+$. The measure $\eta$ is a reflection measure which minimally pushes $u$ upwards to ensure that $u \geq 0$. We assume in this paper that the drift and volatility coefficients, $f, \sigma$, are globally bounded. Such equations were originally studied by Nualart and Pardoux in \cite{NP} and they proved existence and uniqueness for the case where $\sigma$ is constant. Donati-Martin and Pardoux then proved existence for volatility coefficients $\sigma$ which are Lipschitz with linear growth in \cite{DMP}. Finally, Xu and Zhang proved existence \emph{and} uniqueness for the equation where $f$ and $\sigma$ satisfy Lipschitz and linear growth conditions in \cite{Xu}. All of these papers focused on the case where the spatial domain is a finite interval, $[0,1]$, with Dirichlet conditions imposed on the endpoints. Otobe extended the existence theory to the case when the spatial domain is $\mathbb{R}$ in \cite{Otobe}, proving uniqueness for the case when $\sigma$ is constant. Uniqueness has also been shown by Hambly and Kalsi in \cite{Paper} for the equation on an unbounded domain provided that $\sigma$ satisfies a Lipschitz condition, with a Lipschitz coefficient which decays exponentially fast in the spatial variable.

Some interesting properties of the solutions have been proved. In \cite{DalangMuller}, the contact sets for the solutions are studied in the case where the drift, $f$, is zero and the volatility, $\sigma$, is constant. In particular, it is shown that at all positive times, the solution  is equal to zero at at most four points almost surely. In \cite{Zam2}, Zambotti examines the behaviour of the reflection measure in more detail, showing that it is absolutely continuous with respect to Lebesgue measure in the space variable, and also that for each point $x$ in space these densities can be viewed as renormalised local time processes for $(u(t,x))_{t \geq 0}$. Zhang proved the strong Feller property of solutions in \cite{Zhang2}, and together with Xu proved a large deviation result for sequences of solutions to such equations with vanishingly small noise in \cite{Xu}. 

In this paper, we are interested in invariant measures for these equations. There are some results on this topic in the literature. Zambotti proved in \cite{Zam1} that the law of the 3D-Bessel bridge is an invariant measure for the equation when $\sigma$ is constant. Otobe then extended this result to the case where the spatial domain is $\mathbb{R}$ in \cite{Otobe2}, proving that the invariant measure is such that the conditional law in an interval is a 3D Bessel bridge with suitable distributions for the endpoints. For the case when the equation has two reflecting walls, above and below the SPDE solution, existence and uniqueness of invariant measures was proved by Yang and Zhang in \cite{YangZhang}. The proof here relied on the a priori bound on the infinity norm of the solution, which is provided by the obstacles. Recently, Xie has proved that invariant measures for the one barrier case, (\ref{ref_SHE}), are unique when they exist, provided that there exist strictly positive constants $c_1$ and $c_2$ such that $c_1 \leq \sigma(x,u) \leq c_2$ in \cite{Xie}.

To the knowledge of the author, existence of invariant measures in the case where there is only reflection at zero has not been proved in the literature. We close this gap here, under the assumption that the drift and volatility coefficients are bounded. We start by proving an $L^p$ bound for our solution when it has been multiplied by an exponential function which dampens the value backwards in time. This control essentially replaces the a priori bound for the two barrier case in the argument in \cite{YangZhang}. We are then able to prove tightness by uniformly controlling the H\"{o}lder norm of the solution, adapting the arguments of \cite{Dalang2} and \cite{YangZhang} in order to do so. 

Before stating the main theorem of this paper, we recall the definition of a solution to a reflected SPDE. We work on a complete probability space $(\Omega, \mathscr{F}, \mathbb{P})$, with $W$ a space-time white noise on this space. This space is equipped with the filtration generated by $W$, $\mathscr{F}_t^W$, which can be written as
\begin{equation*}
\mathscr{F}^W_t: = \sigma(\{ W(A) \; | \; A \in \mathscr{B}([0,t] \times [0,1] \} ) \vee \mathscr{N},
\end{equation*}
where $\mathscr{N}$ here denotes the $\mathbb{P}$-null sets. We further assume that there exist constants $C_f, C_{\sigma}>0$ such that the drift and volatility coefficients, $f$ and $\sigma$ satisfy the following conditions:

\begin{enumerate}[(I)]
\item For every $u, v \in \mathbb{R}^+$ and every $x \in [0,1]$, $$|f(x,u)-f(x,v)| \leq C_f|u-v|.$$
\item For every $u, v \in \mathbb{R}^+$ and every $x \in [0,1]$, $$|\sigma(x,u)-\sigma(x,v)| \leq C_{\sigma}|u-v|.$$
\item For every $x \in [0,1]$, $u \in \mathbb{R}^+$, $$|f(x,u)| \leq C_{f}.$$
\item For every $x \in [0,1]$, $u \in \mathbb{R}^+$, $$|\sigma(x,u)| \leq C_{\sigma}.$$

\end{enumerate}

\begin{defn}\label{weaksolnref}

We say that the pair $(u, \eta)$ is a solution the SPDE with reflection
\begin{equation*}\label{refSPDE}
\frac{\partial u}{\partial t}= \Delta u + f(x, u(t,x))+ \sigma(x,u(t,x)) \frac{\partial^2 W}{\partial x \partial t} + \eta
\end{equation*}
with Dirichlet conditions $u(t,0)=u(t,1)=0$ and initial data $u(0,x)= u_0 \in C_0((0,1))^+$ if
\begin{enumerate}[(i)]
\item u is a continuous adapted random field on $\mathbb{R}^+ \times [0,1]$ such that $u \geq 0$ almost surely.
\item $\eta$ is a random measure on $\mathbb{R}^+ \times (0,1)$ such that: 
\begin{enumerate}
\item For every $t \geq 0$, $\eta(\left\{t\right\} \times (0,1))=0$,
\item For every $t \geq 0$, $\int_0^t \int_0^1 x(1-x) \eta(\textrm{ds,dx}) < \infty$,
\item $\eta$ is adapted in the sense that for any measurable mapping $\psi$:
\begin{equation*}
\int_0^t \int_0^1 \psi(s,x) \; \eta(\textrm{ds,dx}) \; \;  \textrm{is }  \mathscr{F}_t^W -\textrm{measurable}.
\end{equation*}
\end{enumerate}
\item For every $t \geq 0$ and every $\phi \in C^{1,2}([0,t] \times [0,1])$ with $\phi(s,0)=\phi(s,1)=0$ for every $s \in [0,t]$,
\begin{equation*}
\begin{split}
\int_0^1 u(t,x) \phi(t,x) \textrm{d}x = &  \int_0^1 u(0,x)\phi(0,x) \textrm{d}x + \int_0^t \int_0^1 u(s,x)\frac{\partial^2 \phi}{\partial x^2}(s,x) \textrm{d}x \textrm{d}s \\ & + \int_0^t \int_0^1 u(s,x) \frac{\partial \phi}{\partial t}(s,x) \; \textrm{d}x \textrm{d}s + \int_0^t \int_0^1 f(x,u(s,x))\phi(s,x) \textrm{d}x \textrm{d}s \\ & + \int_0^t \int_0^1 \phi(s,x) \sigma(x,u(s,x)) W(\textrm{d}s,\textrm{d}x) + \int_0^t \int_0^1 \phi(s,x) \; \eta(\textrm{d}s,\textrm{d}x)
\end{split}
\end{equation*}
almost surely.
\item $\int_0^{\infty} \int_0^1 u(t,x) \; \eta(\textrm{d}t,\textrm{d}x)=0$.
\end{enumerate}
\end{defn}

We now state the main result of the paper, which states that reflected SPDEs of the form (\ref{ref_SHE}) have invariant measures.

\begin{thm}\label{main}
Suppose that $f$ and $\sigma$ satisfy the conditions (I)-(IV). There exists an invariant probability measure for the reflected stochastic heat equation (\ref{ref_SHE}).
\end{thm}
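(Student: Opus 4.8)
The plan is to apply the Krylov–Bogolyubov theorem: I want to show that the family of laws of $(u(t,\cdot))_{t \geq 0}$ on $C_0((0,1))$ is tight, from which existence of an invariant measure follows by the standard Feller/averaging argument. Since the one-barrier problem lacks the a priori sup-norm bound that the two-barrier case enjoys, the whole argument hinges on producing time-uniform moment control, so this is where I would concentrate effort.

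First I would set up the mild/evolution formulation of the solution, writing $u(t,\cdot)$ via the Dirichlet heat semigroup $G$ acting on the initial data plus the drift, stochastic, and reflection terms. The key analytical input, as flagged in the introduction, is a time-uniform $L^p$ bound. The difficulty is that the reflection measure $\eta$ can push the solution up arbitrarily and carries no sign control we can exploit directly, so I would not try to bound $\eta$ head-on. Instead I would introduce the exponential damping factor $e^{-\lambda(t-s)}$ in time (the trick alluded to in the introduction), which lets the decay of the heat semigroup and the boundedness of $f$ and $\sigma$ dominate the growth. Concretely, I would estimate $\mathbb{E}\,\norm{u(t,\cdot)}_{L^p}^p$ by testing against the exponentially weighted process, using the contractivity and smoothing of $G$ together with conditions (III)–(IV) to control the drift and stochastic convolution uniformly in $t$. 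The obstacle/nonnegativity condition (iv), $\int u \, d\eta = 0$, should be used to eliminate the reflection contribution in the energy-type estimate, so that $\eta$ never needs an explicit bound; this is the crux that replaces the a priori bound of the two-barrier case.

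With a uniform $L^p$ bound in hand, I would upgrade to tightness in $C_0((0,1))$ by controlling the spatial and temporal Hölder seminorms of $u(t,\cdot)$ uniformly in $t$, following the factorization/Kolmogorov-continuity arguments of \cite{Dalang2} and \cite{YangZhang}. The stochastic convolution is handled by the factorization method, estimating its fractional-Sobolev or Hölder norm via the $L^p$ control of $\sigma(x,u)$, which is bounded by (IV); the deterministic convolution terms and the reflection term are smoother and handled by the regularizing properties of the heat kernel together with the finiteness condition (ii)(b) on $\eta$. Combining the equicontinuity estimate with the uniform $L^p$ moment bound and the Arzelà–Ascoli criterion yields tightness of $\{\mathrm{Law}(u(t,\cdot))\}_{t\geq 0}$ on $C_0((0,1))$.

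Finally I would invoke Krylov–Bogolyubov: define the Cesàro averages $\mu_T := \frac{1}{T}\int_0^T \mathrm{Law}(u(t,\cdot))\,dt$, note that tightness of the family passes to these averages so that $(\mu_T)$ is tight and admits a weakly convergent subsequence $\mu_{T_k} \to \mu$, and check that the associated Markov semigroup is Feller (which follows from continuous dependence of the solution on initial data, a consequence of the Lipschitz conditions (I)–(II)). Weak continuity of the transition semigroup then forces the limit $\mu$ to be invariant. I expect the genuinely hard part to be the time-uniform $L^p$ estimate in the presence of the reflection measure; once that is established, the tightness and Krylov–Bogolyubov steps are comparatively routine adaptations of known techniques.
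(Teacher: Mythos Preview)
Your overall plan—uniform-in-time moment control with an exponential weight, then uniform H\"older bounds, then Arzel\`a--Ascoli and Krylov--Bogolyubov—matches the paper's strategy exactly. The divergence is in how you propose to neutralise the reflection measure $\eta$, and this is where a real gap appears. The paper does \emph{not} use the complementarity condition $\int u\,d\eta=0$ in an energy estimate, nor does it bound the heat-kernel integral against $\eta$ via condition (ii)(b). Instead, it introduces $\tilde u(t,x)=e^{-\alpha(T-t)}u(t,x)$, observes that $(\tilde u,\tilde\eta)$ again solves a reflected SPDE, and then invokes the Nualart--Pardoux comparison (Theorem~1.4 in \cite{NP}): if $\tilde v$ is the solution of the corresponding \emph{unreflected} equation with the same coefficients, then $\|\tilde u\|_{\infty,T}\le 2\|\tilde v\|_{\infty,T}$. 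This single inequality removes $\eta$ from the problem entirely, both for the $L^\infty$ bound and, via the result of \cite{Dalang2} that the H\"older norm of $\tilde u$ is controlled by the H\"older norm of $\tilde v$, for the tightness step as well.

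Your proposed alternative is shaky on two counts. First, an energy estimate using $\int u\,d\eta=0$ would at best yield an $L^p$-in-space bound, and making an It\^o-type argument rigorous for reflected SPDEs is itself nontrivial; the paper sidesteps this completely. Second, and more seriously, your claim that the reflection term in the mild formulation is ``handled by the regularizing properties of the heat kernel together with the finiteness condition (ii)(b)'' does not give a uniform-in-$t$ H\"older bound: (ii)(b) is only a finiteness statement for each fixed $t$, with no control on growth as $t\to\infty$, so you cannot extract the time-uniform equicontinuity needed for tightness from it. The missing idea is precisely the reduction to the unreflected solution $\tilde v$; once you have that, only $\tilde v$ needs to be estimated in mild form, and the heat-kernel bounds in the paper's Propositions~2.3--2.5 finish the job.
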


\section{An $L^p$ bound for solutions to reflected SPDEs}

The aim of this section is to prove the following Theorem.

\begin{thm}\label{Lp_Bound_Main}
Suppose that $(u,\eta)$ solves the reflected stochastic heat equation (\ref{ref_SHE}). Assume that the drift and volatility functions, $f$, $\sigma$ are bounded. Then we have that, for any $\alpha >0$ and $p \geq 1$, 
\begin{equation}\label{Lp_bound}
\sup\limits_{T>0} \mathbb{E}\left[ \sup\limits_{t \leq T} \sup\limits_{x \in [0,1]} |u(t,x)e^{-\alpha (T-t)} |^p \right] < \infty.
\end{equation}
\end{thm}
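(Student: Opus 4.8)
The plan is to work with the mild (integral) formulation and to split $u$ into a ``free'' part driven by the same coefficients and a nonnegative reflection part, controlling each separately in the exponentially weighted norm. Writing $G_r(x,y)$ for the Dirichlet heat kernel on $[0,1]$, I would set
$$\rho(t,x) := \int_0^1 G_t(x,y)u_0(y)\,dy + \int_0^t\int_0^1 G_{t-s}(x,y)f(y,u(s,y))\,dy\,ds + \int_0^t\int_0^1 G_{t-s}(x,y)\sigma(y,u(s,y))\,W(ds,dy),$$
so that $v := u - \rho = \int_0^t\int_0^1 G_{t-s}(x,y)\,\eta(ds,dy) \geq 0$ is exactly the contribution of the reflection. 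Crucially, $v$ solves, pathwise, the deterministic obstacle problem $\partial_t v = \Delta v + \eta$ with zero initial data, Dirichlet conditions, the constraint $v \geq -\rho$, and $\eta$ supported on $\{v = -\rho\} = \{u = 0\}$.

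First I would dispense with the deterministic terms in $\rho$. Because the Dirichlet semigroup contracts with the spectral gap, $\sup_x \int_0^1 G_r(x,y)\,dy \lesssim e^{-\pi^2 r}$, the initial-data term decays and the drift term is bounded by a constant multiple of $C_f\int_0^\infty e^{-\pi^2 r}\,dr$, uniformly in $t$ and $T$. The stochastic convolution $z(t,x) := \int_0^t\int_0^1 G_{t-s}(x,y)\sigma(y,u(s,y))\,W(ds,dy)$ is handled by the factorization method together with the Burkholder--Davis--Gundy inequality and the standard estimate $\int_0^1 G_r(x,y)^2\,dy \lesssim r^{-1/2}$, adapting the moment and H\"older-continuity arguments of \cite{Dalang2}. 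Since $|\sigma|\leq C_\sigma$, these estimates are uniform over unit time windows, and the weight $e^{-\alpha(T-t)}$ --- which decays into the past --- renders the running maximum $\sup_{t\le T}\sup_x |z(t,x)e^{-\alpha(T-t)}|$ integrable to any order $p$ uniformly in $T$; this is the precise role played by the exponential damping.

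The heart of the argument is the reflection term $v$. A pathwise comparison (via penalisation: $v$ is the decreasing limit of solutions to $\partial_t v^\varepsilon = \Delta v^\varepsilon + \varepsilon^{-1}(-\rho - v^\varepsilon)^+$, for which the spatially constant function $\sup_{s\le t}\sup_y \rho(s,y)^-$ is a supersolution) yields the pointwise bound $0 \le v(t,x) \le \sup_{s\le t}\sup_y \rho(s,y)^-$. This alone is too crude for the weighted estimate, since the running supremum over all $s \le t$ destroys the time decay supplied by the weight. To remedy this I would localise the reflection integral, splitting $\int_0^t = \int_{t-1}^t + \int_0^{t-1}$. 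On the recent window the comparison is restarted at time $t-1$, bounding the recent contribution by the behaviour of $\rho^-$ over a single unit window, which carries good moments uniformly in the window by the estimates above. For the distant contribution I would use that for $t-s\ge 1$ the Dirichlet kernel is dominated by its first mode, $G_{t-s}(x,y)\lesssim e^{-\pi^2(t-s)}\sin(\pi x)\sin(\pi y)$, so that this term is controlled by $\int_0^{t-1} e^{-\pi^2(t-s)}\,d\Xi_s$, where $\Xi_s := \int_0^s\int_0^1 \sin(\pi y)\,\eta(dr,dy)$.

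Finally, to bound the distant term I would control the total reflection through the projection $X_t := \int_0^1 u(t,x)\sin(\pi x)\,dx$, which satisfies the reflected, mean-reverting scalar equation $dX_t = -\pi^2 X_t\,dt + \langle f,\sin(\pi\cdot)\rangle\,dt + dM_t + d\Xi_t$ with $X_t\ge 0$, bounded drift, bounded-variance martingale part $M$, and increasing $\Xi$. Mean reversion yields uniform-in-time moment bounds for $X_t$ and for the increments $\Xi_{s+1}-\Xi_s$, whence $\int_0^{t-1}e^{-\pi^2(t-s)}\,d\Xi_s$ has moments bounded uniformly in $t$ and, after inserting the weight, uniformly in $T$. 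Combining the bounds for the deterministic terms, the stochastic convolution, and the two pieces of the reflection, and using $u \ge 0$ so that $|u| = u \le |\rho| + v$, gives \eqref{Lp_bound}. I expect the reflection term to be the main obstacle: the naive obstacle comparison does not see the time decay, and reconciling the short-time singularity of the heat kernel (which forces the local, restarted comparison) with the accumulation of reflection from the distant past (which forces the mean-reversion estimate on $X_t$) is the delicate point of the whole proof.
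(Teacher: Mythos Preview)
Your approach is genuinely different from the paper's, and the outline for $\rho$ and the recent-window reflection is sound. The gap is in the final step for the distant reflection. You assert that mean reversion of the scalar projection $X_t=\langle u(t,\cdot),\sin(\pi\cdot)\rangle$ yields uniform-in-time moment bounds, but the equation $dX_t=-\pi^2X_t\,dt+b_t\,dt+dM_t+d\Xi_t$ is \emph{not} a reflected OU process: $\Xi$ is not the Skorokhod local time of $X$ at zero, since the SPDE measure $\eta$ can charge times at which $X_t>0$. Applying It\^o to $X_t^2$ therefore produces the uncontrolled nonnegative term $2X_t\,d\Xi_t$, and the usual Lyapunov argument stalls; as written the step is circular (bounding $X_t$ requires bounding $\Xi$, which you wanted to bound via $X_t$). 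One repair is to feed your own recent-window estimate back in: since $\int_{t-1}^t e^{-\pi^2(t-s)}\,d\Xi_s=\langle v_{\text{recent}}(t,\cdot),\sin(\pi\cdot)\rangle$ is already controlled independently of $u$, the variation-of-constants formula gives a genuine contraction $X_t\le e^{-\pi^2}X_{t-1}+(\text{good unit-window increments})$, which iterates. But this interdependence between your ``recent'' and ``distant'' steps is absent from the sketch, and the phrase ``mean reversion'' hides exactly the difficulty you flag as delicate.

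The paper's route is much shorter and never touches $\eta$ at all. It observes that $\tilde u(t,x):=e^{-\alpha(T-t)}u(t,x)$ itself solves a reflected SPDE with drift $\tilde f(t,x,z)=e^{-\alpha(T-t)}f(x,e^{\alpha(T-t)}z)+\alpha z$ and bounded volatility. The Nualart--Pardoux obstacle comparison then gives $\|\tilde u\|_{\infty,T}\le 2\|\tilde v\|_{\infty,T}$ for the \emph{unreflected} solution $\tilde v$ driven by $\tilde f(\cdot,\tilde u),\tilde\sigma(\cdot,\tilde u)$, which replaces all of your analysis of $v$ in one stroke. In the mild form of $\tilde v$ the stochastic integral is handled essentially as you propose; the novelty is the drift, whose linear part $\alpha\tilde u$ is integrated against the heat kernel using $\sup_x\int_0^\infty\!\int_0^1 G(s,x,y)\,dy\,ds\le\tfrac13$ to contribute at most $\tfrac{\alpha}{3}\|\tilde u\|_{\infty,T}$. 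This yields $\mathbb E[\|\tilde u\|_{\infty,T}^p]\le C+C_p(\alpha/3)^p\,\mathbb E[\|\tilde u\|_{\infty,T}^p]$, which closes by rearrangement for $\alpha$ small, and the statement for larger $\alpha$ follows trivially from that for smaller $\alpha$. The contrast: the paper exploits the freedom in $\alpha$ to make the feedback coefficient small, whereas your spectral-gap scheme has no such tunable parameter and must instead thread the recent/distant estimates together to close.
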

Such a bound will later enable us to obtain uniform H\"{o}lder-type estimates for the functions $u(t,\cdot)$. The first step towards obtaining this bound is understanding the equation satisfied by $\tilde{u}(t,x):= e^{-\alpha(T-t)} u(t,x)$.

\begin{prop}\label{exp_SPDE}
Let u solve the reflected SPDE (\ref{ref_SHE}). Let $\tilde{u}(t,x):= e^{-\alpha(T-t)}u(t,x)$ for some $\alpha, T>0$. Then $\tilde{u}$ solves the reflected SPDE
\begin{equation}\label{dec_PDE}
\frac{\partial \tilde{u}}{\partial t}= \Delta \tilde{u} + \tilde{f}(t,x,\tilde{u}(t,x)) + \tilde{\sigma}(t,x,\tilde{u}(t,x)) \frac{\partial^2 W}{\partial x \partial t} + \tilde{\eta},
\end{equation}
where 
\begin{enumerate}
\item $\tilde{f}(t,x,z)=e^{-\alpha (T-t)}f(x,e^{\alpha(T-t)}z)+ \alpha z.$
\item $\tilde{\sigma}(t,x,z)= e^{-\alpha (T-t)}\sigma(x,e^{\alpha(T-t)}z).$
\item $\tilde{\eta}(\textrm{d}x, \textrm{d}t) = e^{-\alpha(T-t)} \eta(\textrm{d}x,\textrm{d}x).$
\end{enumerate}
\end{prop}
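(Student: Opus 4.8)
The plan is to verify the weak formulation in Definition \ref{weaksolnref} for $\tilde{u}$ directly, by starting from the weak formulation satisfied by $u$ and tracking the effect of multiplication by the deterministic factor $e^{-\alpha(T-t)}$. The natural device is to test the equation for $u$ against a modified test function. Specifically, given a test function $\phi \in C^{1,2}([0,t]\times[0,1])$ vanishing at the spatial endpoints, I would apply the weak formulation for $u$ not to $\phi$ itself but to $\psi(s,x) := e^{-\alpha(T-s)}\phi(s,x)$, which is again an admissible test function vanishing at $x=0,1$. Since $\tilde{u}(t,x)=e^{-\alpha(T-t)}u(t,x)$, the left-hand side $\int_0^1 \tilde{u}(t,x)\phi(t,x)\,\mathrm{d}x$ equals $\int_0^1 u(t,x)\psi(t,x)\,\mathrm{d}x$, so the two formulations should match once all terms on the right are re-expressed in terms of $\tilde{u}$, $\tilde{f}$, $\tilde{\sigma}$ and $\tilde{\eta}$.

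The key computation is the time-derivative term. Since $\psi$ carries the extra time dependence, its partial derivative in time picks up an additional factor: $\frac{\partial \psi}{\partial s}(s,x) = e^{-\alpha(T-s)}\frac{\partial \phi}{\partial s}(s,x) + \alpha e^{-\alpha(T-s)}\phi(s,x)$. When I substitute into the term $\int_0^t\int_0^1 u(s,x)\frac{\partial\psi}{\partial s}\,\mathrm{d}x\,\mathrm{d}s$, the first piece reproduces $\int_0^t\int_0^1 \tilde{u}(s,x)\frac{\partial\phi}{\partial s}\,\mathrm{d}x\,\mathrm{d}s$, while the second piece yields $\int_0^t\int_0^1 \alpha\,\tilde{u}(s,x)\phi(s,x)\,\mathrm{d}x\,\mathrm{d}s$. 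This extra term is precisely what gets absorbed into the new drift, explaining the $+\alpha z$ appearing in $\tilde{f}$. The Laplacian term transforms cleanly since $\frac{\partial^2\psi}{\partial x^2}=e^{-\alpha(T-s)}\frac{\partial^2\phi}{\partial x^2}$ and $e^{-\alpha(T-s)}u(s,x)=\tilde{u}(s,x)$.

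For the remaining terms I would check each in turn. The drift term $\int_0^t\int_0^1 f(x,u(s,x))\psi(s,x)\,\mathrm{d}x\,\mathrm{d}s$ becomes $\int_0^t\int_0^1 e^{-\alpha(T-s)}f(x,u(s,x))\phi(s,x)\,\mathrm{d}x\,\mathrm{d}s$; writing $u(s,x)=e^{\alpha(T-s)}\tilde{u}(s,x)$ gives exactly $e^{-\alpha(T-s)}f(x,e^{\alpha(T-s)}\tilde{u}(s,x))$, matching the first summand of $\tilde{f}$. The same substitution handles the stochastic integral, producing $\tilde{\sigma}(s,x,\tilde{u}(s,x))$ against $W(\mathrm{d}s,\mathrm{d}x)$, since the deterministic factor $e^{-\alpha(T-s)}$ may be pulled inside the It\^o integrand. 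The reflection term similarly gives $\int_0^t\int_0^1 \phi(s,x)e^{-\alpha(T-s)}\eta(\mathrm{d}s,\mathrm{d}x)$, which is the definition of $\tilde\eta$ tested against $\phi$. Finally I would confirm that $(\tilde u,\tilde\eta)$ inherits the structural properties (i)--(iv): positivity of $\tilde u$ and the measure conditions on $\tilde\eta$ follow because $e^{-\alpha(T-t)}>0$ is bounded above and below on $[0,T]$, and the contact condition $\int\int \tilde u\,\mathrm{d}\tilde\eta=0$ follows from $\int\int u\,\mathrm{d}\eta=0$ since the two integrands differ only by the strictly positive weight $e^{-2\alpha(T-t)}$.

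The argument is essentially bookkeeping and I do not expect a genuine obstacle; the only point requiring a little care is the justification that $e^{-\alpha(T-s)}$ can be moved inside the stochastic integral, which is immediate as it is a bounded deterministic function, and the verification that $\psi$ is a legitimate test function so that the weak formulation for $u$ applies to it. I would also note the harmless typo to be corrected in the statement, where $\tilde\eta(\mathrm{d}x,\mathrm{d}t)$ should read $e^{-\alpha(T-t)}\eta(\mathrm{d}t,\mathrm{d}x)$.
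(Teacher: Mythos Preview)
Your proposal is correct and follows essentially the same approach as the paper, which simply states that the result ``can be shown by testing the equation and a change of variables.'' Your expansion via the modified test function $\psi(s,x)=e^{-\alpha(T-s)}\phi(s,x)$ is precisely the intended computation, and your verification of the structural conditions (i)--(iv) and the noted typo are appropriate.
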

\begin{proof}
This can be shown by testing the equation and a change of variables.
\end{proof}

We now present some estimates for the heat kernel. We will then be able to bound the solutions to our SPDEs by first writing them in mild form and then applying these estimates, together with Burkholder's inequality and H\"{o}lder's inequality.

\begin{prop}\label{Heat}
Let $G$ denote the Dirichlet heat kernel on $[0,1]$. The following estimate holds:
\begin{equation*}
\sup\limits_{x \in [0,1]}\int_0^{\infty} \int_0^1 G(s,x,y) \textrm{d}y \textrm{d}s < \infty.
\end{equation*}
\begin{proof}
We have the following expression for $G$
\begin{equation*}
G(s,x,y)= 2\sum\limits_{k =1}^{\infty} e^{-k^2 \pi^2 s} \sin(k \pi x) \sin(k \pi y).
\end{equation*}
Calculating, we have that 
\begin{equation*}
\begin{split}
\int_0^{\infty}  &  \int_0^1 G(t,x,y) \; \textrm{d}y \textrm{d}t = 2\left| \int_0^{\infty} \int_0^1 \sum\limits_{k =1}^{\infty} e^{-k^2 \pi^2 t} \sin(k \pi x) \sin(k \pi y) \; \textrm{d}y \textrm{d}t \right| \\ & \leq 2\int_0^{\infty} \int_0^1 \sum\limits_{k=1}^{\infty} e^{-k^2 \pi^2 t} |\sin(k \pi y) | \; \textrm{d}y \textrm{d}t = 2\sum\limits_{k=1}^{\infty} \int_0^{\infty} e^{-k^2 \pi^2 t} \left( \int_0^1 |\sin (k \pi y) | \textrm{d}y \right) \textrm{d}t \\ & \leq 2\sum\limits_{k=1}^{\infty} \int_0^{\infty} e^{-k^2 \pi^2 t} \textrm{d}t = \sum\limits_{k=1}^{\infty} \frac{2}{k^2 \pi^2}= \frac{1}{3}.
\end{split}
\end{equation*}
\end{proof}
\end{prop}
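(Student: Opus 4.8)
The plan is to prove the finite-sup bound for the Dirichlet heat kernel on $[0,1]$ by starting from the explicit eigenfunction expansion
\begin{equation*}
G(s,x,y)= 2\sum_{k=1}^{\infty} e^{-k^2 \pi^2 s} \sin(k \pi x)\sin(k \pi y),
\end{equation*}
which is the standard series representation of the Dirichlet heat semigroup generated by $\Delta$ with zero boundary conditions on $[0,1]$. The key observation is that once one integrates in both $y$ and $s$, all dependence on $x$ can be thrown away by the crude bound $|\sin(k\pi x)| \le 1$, so the supremum over $x$ is controlled by a single $x$-independent quantity; the real content is then simply summing a convergent series.

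The main steps, in order, would be: first, bring the supremum inside and use $|\sin(k\pi x)| \le 1$ together with the triangle inequality to dominate the (signed) double integral by $2\sum_k e^{-k^2\pi^2 s}\,|\sin(k\pi y)|$ under the integral sign. Second, justify interchanging the sum with the two integrals --- this is legitimate by Tonelli's theorem since after taking absolute values every term is nonnegative, which is precisely why one passes to $|\sin(k\pi y)|$ before summing. Third, perform the inner spatial integral using the trivial estimate $\int_0^1 |\sin(k\pi y)|\,\textrm{d}y \le 1$, and then the temporal integral $\int_0^{\infty} e^{-k^2\pi^2 s}\,\textrm{d}s = 1/(k^2\pi^2)$. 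Fourth, sum the resulting series via the Basel identity $\sum_{k\ge 1} k^{-2} = \pi^2/6$, giving the explicit value
\begin{equation*}
2\sum_{k=1}^{\infty} \frac{1}{k^2\pi^2}= \frac{2}{\pi^2}\cdot\frac{\pi^2}{6}= \frac{1}{3}.
\end{equation*}

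I would emphasize that the only subtlety --- the single point requiring care --- is the legitimacy of the termwise integration, since the series $\sum_k e^{-k^2\pi^2 s}\sin(k\pi x)\sin(k\pi y)$ is not absolutely convergent uniformly down to $s=0$ (indeed $G(s,x,\cdot)$ converges to a Dirac mass as $s\downarrow 0$). The clean way to handle this is exactly as sketched: move to absolute values first, so that Tonelli applies to the nonnegative integrand $e^{-k^2\pi^2 s}|\sin(k\pi y)|$ with no integrability hypothesis needed, and the finiteness of the final sum retroactively confirms that all the interchanges were permissible. Thus the bound $\tfrac13$ is not merely an upper estimate but follows from a chain of inequalities that never requires more than nonnegativity and the elementary identities above; no heat-kernel fine structure or uniform estimate near $s=0$ is invoked.
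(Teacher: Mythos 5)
Your proposal is correct and follows essentially the same route as the paper: expand $G$ in its sine eigenfunction series, discard the $x$-dependence via $|\sin(k\pi x)|\leq 1$, interchange sum and integrals, integrate in $s$ to get $1/(k^2\pi^2)$, and sum via $\sum_{k\geq 1}k^{-2}=\pi^2/6$ to obtain the bound $\tfrac13$. Your explicit Tonelli justification of the termwise integration is a welcome addition that the paper leaves implicit, but it does not change the argument.
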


\begin{prop}\label{Heat_Bound}
The following estimates hold for $p>4$
\begin{enumerate}
\item For every $0 \leq s \leq t$ such that $|t-s| \leq 1$
\begin{equation*}
\sup\limits_{x \geq 0} \left[ \int_s^t \left( \int_0^1 G(t-r,x,z)^2 \textrm{d}z \right)^{p/(p-2)} \textrm{d}r \right]^{(p-2)/2} \leq C_p |t-s|^{p-4/4}.
\end{equation*}
\item For every $0 \leq s \leq t$ such that $|t-s| \leq 1$
\begin{equation*}
\sup\limits_{x \geq 0} \left[ \int_0^s \left( \int_0^1 (G(t-r,x,z)-G(s-r,x,z)^2 \textrm{d}z \right)^{p/(p-2)} \textrm{d}r \right]^{(p-2)/2} \leq C_p |t-s|^{p-4/4}.
\end{equation*}
\item For every $x, y \in [0,1]$
\begin{equation*}
\sup\limits_{t \geq 0} \left[ \int_0^t \left( \int_0^1 (G(t-r,x,z)-G(t-r,y,z)^2 \textrm{d}z \right)^{p/(p-2)} \textrm{d}r \right]^{(p-2)/2} \leq C_p |x-y|^{p-4/2}.
\end{equation*}
\end{enumerate}
\end{prop}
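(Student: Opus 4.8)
The plan is to reduce all three estimates to pointwise-in-time bounds on the inner $L^2$ norms, extracted from the Fourier series for $G$ recorded in Proposition \ref{Heat}, and then to carry out a one-dimensional integral in the remaining time variable, where the hypothesis $p>4$ is exactly what guarantees integrability near the singularity at $\tau=0$.

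For the building blocks I would apply Parseval's identity in the $z$ variable against the orthonormal basis $\{\sqrt{2}\sin(k\pi z)\}$. This at once gives $\int_0^1 G(\tau,x,z)^2\,\textrm{d}z = 2\sum_k e^{-2k^2\pi^2\tau}\sin^2(k\pi x) \le 2\sum_k e^{-2k^2\pi^2\tau} \le C\tau^{-1/2}$, the final step comparing the sum to a Gaussian integral; note that the bound is uniform in $x$. For the increment kernels I would insert the elementary inequalities $(\sin(k\pi x)-\sin(k\pi y))^2 \le \min(4,k^2\pi^2|x-y|^2)$ and $(e^{-k^2\pi^2\tau_1}-e^{-k^2\pi^2\tau_2})^2 \le e^{-2k^2\pi^2\tau_2}\min(1,k^4\pi^4|\tau_1-\tau_2|^2)$ into the series and split each sum (equivalently its comparison integral) at the frequency where the minimum switches branches. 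This yields the interpolation bounds
\begin{gather*}
\int_0^1 (G(\tau,x,z)-G(\tau,y,z))^2\,\textrm{d}z \le C\min\bigl(\tau^{-1/2},\, |x-y|^2\tau^{-3/2}\bigr),\\
\int_0^1 (G(t-r,x,z)-G(s-r,x,z))^2\,\textrm{d}z \le C\min\bigl((s-r)^{-1/2},\, |t-s|^2(s-r)^{-5/2}\bigr),
\end{gather*}
uniformly in the spatial points.

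With these in hand each estimate is a scalar computation; write $q:=p/(p-2)$, so that $p>4 \Leftrightarrow q/2<1$. For (1), substituting $\tau=t-r$ and using $\int_0^1 G^2\,\textrm{d}z \le C\tau^{-1/2}$ reduces the claim to $\int_0^{t-s}\tau^{-q/2}\,\textrm{d}\tau = C(t-s)^{1-q/2}$, which is finite precisely because $q/2<1$; since $1-q/2=(p-4)/(2(p-2))$, raising to the power $(p-2)/2$ gives $(t-s)^{(p-4)/4}$. For (3) I would proceed in the same way but split at the crossover $\tau=|x-y|^2$: the head $\int_0^{|x-y|^2}\tau^{-q/2}\,\textrm{d}\tau$ converges since $q/2<1$, while the tail $\int_{|x-y|^2}^{\infty}|x-y|^{2q}\tau^{-3q/2}\,\textrm{d}\tau$ converges since $3q/2>1$, and both contribute $C|x-y|^{2-q}$ with $2-q=(p-4)/(p-2)$, so the power $(p-2)/2$ produces $|x-y|^{(p-4)/2}$. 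Estimate (2) is identical, splitting at $\tau=|t-s|$ and using the exponents $-q/2$ and $-5q/2$ to obtain $|t-s|^{1-q/2}$ and hence $|t-s|^{(p-4)/4}$; the restriction $|t-s|\le 1$ is used only to absorb the finite tail into the constant.

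The main obstacle is establishing the sharp minimum-type increment bounds with constants uniform in the spatial points and in the outer time variable, which is where the frequency-splitting of the series must be performed carefully: retaining only one branch of each minimum would either destroy integrability near $\tau=0$ or spoil the correct Hölder exponent. Once those interpolation bounds are secured, the remaining work is the routine one-dimensional integration driven entirely by the constraint $p>4$.
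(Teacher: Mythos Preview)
Your argument is correct and complete as sketched; the Parseval identity on $\{\sqrt{2}\sin(k\pi\cdot)\}$ indeed yields the three interpolation bounds you state, and the ensuing one-variable integrals split exactly as you describe, with $p>4\Leftrightarrow q/2<1$ driving the near-singularity integrability and the tail convergence following from $q>1$. The paper, however, takes a different route: it passes to the method-of-images representation of $G$, isolates the three dominant Gaussian kernels $\frac{1}{\sqrt{4\pi t}}e^{-(x\mp y)^2/4t}$, $\frac{1}{\sqrt{4\pi t}}e^{-(x+y-2)^2/4t}$ together with a smooth remainder $L$ vanishing at $t=0$, and then defers the estimates for the Gaussian pieces to Propositions~A.1 and~A.4 of \cite{Paper} while handling $L$ by differentiating under the sum. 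Your approach is more self-contained---it needs no external input beyond elementary facts about Gaussian sums---and makes the role of the threshold $p=4$ transparent; the paper's approach is shorter on the page but offloads the real work to the cited appendix, and its remark that ``the constants will not depend on $t$'' is precisely what your tail integrals $\int_{\cdot}^{\infty}\tau^{-3q/2}\,\textrm{d}\tau$ and $\int_{\cdot}^{\infty}\tau^{-5q/2}\,\textrm{d}\tau$ verify directly. One small point: the restriction $|t-s|\le 1$ is in fact superfluous in your argument (both head and tail contribute $C|t-s|^{1-q/2}$ regardless), so you could note that estimates (1)--(2) hold for all $0\le s\le t$.
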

\begin{proof}
We note that
\begin{equation*}
G(t,x,y)= \frac{1}{\sqrt{4 \pi t}}\sum\limits_{n = - \infty}^{n = \infty} \left[ \exp \left( - \frac{(x-y+2n)^2}{4t}\right)- \exp \left( - \frac{(x+y+2n)^2}{4t}\right) \right].
\end{equation*}
We can write this as
\begin{equation*}
\frac{1}{\sqrt{4 \pi t}} \left[ \exp \left( - \frac{(x-y)^2}{4t}\right)- \exp \left( - \frac{(x-y)^2}{4t}\right)- \exp \left( - \frac{(x+y-2)^2}{4t}\right) \right] + L(t,x,y),
\end{equation*}
where $L$ is a smooth function which vanishes at $t=0$.
To control the contributions of the first three terms, see the proof of Proposition A.1 and Proposition A.4 in \cite{Paper} for details. Note that the constants will not depend on $t$ for this case. The residual component $L$ can also be controlled by differentiating under the sum.
\end{proof}

Equipped with these heat kernel estimates, we can now prove the following bound on the white noise term which will appear in the mild form for $\tilde{u}$.

\begin{prop}\label{stoch_bound}
Suppose that $\sigma$ is bounded and $\alpha>0$. Define for $t \leq T$, $x \in [0,1]$
\begin{equation*}
I_2^T(t,x):= \int_0^t \int_0^1 e^{-\alpha(T-s)}G(t-s,x,y)\sigma(y,u(s,y)) W(\textrm{d} y,\textrm{d}s).
\end{equation*}
Then for $p \geq 1$ and $\gamma \in (0,1)$,
\begin{equation*}
\sup\limits_{T>0} \mathbb{E}\left[ \sup\limits_{s,t \in [0,T], s \neq t} \sup\limits_{x, y \in [0,1], x \neq y} \left(\frac{ |I_2^T(t,x)- I_2^T(s,y)|}{|t-s|^{\gamma /4} + |x-y|^{\gamma /2}}\right)^p \;  \right] < \infty.
\end{equation*}
\end{prop}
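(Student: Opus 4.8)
The plan is to deduce the uniform Hölder bound from a two–parameter Kolmogorov continuity argument in the parabolic metric $\rho\big((t,x),(s,y)\big):=|t-s|^{1/4}+|x-y|^{1/2}$, with the exponential weight supplying uniformity in $T$. Since $\mathbb{E}[|\cdot|^p]^{1/p}$ is nondecreasing in $p$ and larger Hölder exponents dominate smaller ones, it suffices to establish the estimate for $p$ arbitrarily large and $\gamma$ arbitrarily close to $1$; I therefore take $p$ as large as needed below.

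The first step is the increment bound: for $p>4$ and $|t-s|\le 1$,
\[
\mathbb{E}\big[\,|I_2^T(t,x)-I_2^T(s,y)|^p\,\big]\;\le\;C_p\,e^{-\alpha p\,(T-t\vee s)}\Big(|t-s|^{(p-4)/4}+|x-y|^{(p-4)/2}\Big),
\]
with $C_p$ independent of $T$ and of the base points. I split the increment into a temporal part $I_2^T(t,x)-I_2^T(s,x)$ and a spatial part $I_2^T(s,x)-I_2^T(s,y)$, and split the temporal part further into the stochastic integral over the new strip $[s,t]$ and that of the kernel difference $G(t-r,x,\cdot)-G(s-r,x,\cdot)$ over $[0,s]$. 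To each resulting stochastic integral I apply Burkholder's inequality and bound $|\sigma|\le C_\sigma$, leaving a quadratic–variation term of the form $\big(\int_0^{\cdot}e^{-2\alpha(T-r)}\int_0^1(\text{kernel})^2\,\mathrm{d}z\,\mathrm{d}r\big)^{p/2}$. Here is the crucial manoeuvre: I apply Hölder's inequality in $r$ with conjugate exponents $\tfrac{p}{p-2}$ and $\tfrac{p}{2}$, placing the squared spatial kernel norm $\int_0^1(\text{kernel})^2\mathrm{d}z$ in the $L^{p/(p-2)}$ factor and the whole weight $e^{-2\alpha(T-r)}$ in the $L^{p/2}$ factor. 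The latter contributes $\big(\int_0^{\cdot}e^{-\alpha p(T-r)}\mathrm{d}r\big)^{2/p}\le C_{\alpha,p}\,e^{-2\alpha(T-\cdot)}$, carrying the exponential damping while introducing \emph{no} dependence on the length of the time interval; and after raising to the power $p/2$ the first factor becomes exactly the quantity bounded by the three estimates of Proposition~\ref{Heat_Bound}, giving the displayed bound with the damping attached to the later endpoint $t\vee s$. The same computation over $[0,t]$ (using that $\int_0^1 G(\tau,x,z)^2\mathrm{d}z$ is integrable at $\tau=\infty$) gives the pointwise bound $\mathbb{E}[\,|I_2^T(t,x)|^p\,]\le C_p\,e^{-\alpha p(T-t)}$.

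Next I convert these moment bounds into the Hölder statement while preserving uniformity in $T$. I partition $[0,T]$ into unit windows $D_k:=[(T-k-1)^+,\,T-k]\times[0,1]$, $k=0,1,2,\dots$, on which $e^{-\alpha(T-t\vee s)}\le e^{-\alpha k}$. Each $D_k$ has bounded $\rho$–diameter and metric dimension $6$ ($4$ from time, $2$ from space), and on it the increment bound reads $\mathbb{E}[\,|I_2^T(z)-I_2^T(z')|^p\,]\le C_p\,e^{-\alpha p k}\,\rho(z,z')^{\,p-4}$. The Kolmogorov–Chentsov theorem applied in the metric $\rho$ then gives, for $p>10/(1-\gamma)$,
\[
\mathbb{E}\Big[\sup_{z\neq z'\in D_k}\Big(\tfrac{|I_2^T(z)-I_2^T(z')|}{|t-s|^{\gamma/4}+|x-y|^{\gamma/2}}\Big)^p\Big]\le C_{p,\gamma}\,e^{-\alpha p k},\qquad \mathbb{E}\Big[\sup_{D_k}|I_2^T|^p\Big]\le C_{p}\,e^{-\alpha p k},
\]
with $C_{p,\gamma},C_p$ independent of $k$ and $T$ because each $D_k$ is a translate of a fixed reference rectangle; here I used that on a window $|t-s|,|x-y|\le 1$, so that $\rho^{\,\theta}\le|t-s|^{\gamma/4}+|x-y|^{\gamma/2}$ whenever the $\rho$–Hölder exponent $\theta$ exceeds $\gamma$.

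Finally I assemble the global bound. For a pair with $|t-s|\le1$ the two points lie in at most two consecutive windows, and inserting the common boundary time bounds the global quotient by the sum of the two local quotients; for $|t-s|>1$ the denominator is $\ge1$ and the quotient is dominated by $\sup_{D_k}|I_2^T|+\sup_{D_{k'}}|I_2^T|$. Hence the global supremum is at most $\big(\sum_k X_k^p\big)^{1/p}$ for random variables $X_k$ with $\mathbb{E}[X_k^p]\le C_{p,\gamma}e^{-\alpha p k}$, and taking expectations bounds the quantity in the statement by $C_{p,\gamma}\sum_{k\ge0}e^{-\alpha p k}<\infty$, uniformly in $T$. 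The main obstacle is precisely this uniformity over the growing time–domain: a direct application of Kolmogorov's theorem on $[0,T]\times[0,1]$ would give a constant diverging with $T$, and it is the damping factor $e^{-\alpha p(T-t\vee s)}$ — produced by routing the weight into the $L^{p/2}$ Hölder factor so that it decays without generating an interval–length factor, and then summed through $\sum_k e^{-\alpha pk}<\infty$ — that rescues it. Tracking this weight to the correct endpoint throughout the increment estimate is the delicate bookkeeping.
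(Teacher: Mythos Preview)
Your proof is correct and follows essentially the same strategy as the paper: both localise to unit time windows, derive a moment increment bound with the exponential damping via Burkholder plus H\"older (routing the weight into the $L^{p/2}$ factor and the kernel into the $L^{p/(p-2)}$ factor so that Proposition~\ref{Heat_Bound} applies), invoke a Kolmogorov-type lemma on each window to get local H\"older constants $X_k$ with $\mathbb{E}[X_k^p]\le Ce^{-\alpha p k}$, and then sum over windows. The only cosmetic differences are that the paper indexes windows forward as $[n,n+1]$ and handles far-apart times by telescoping through integer endpoints (bounding by $\sum_n X_n$ and then controlling $\mathbb{E}[(\sum_n X_n)^p]$ via a discrete H\"older inequality), whereas you index backward from $T$ and treat $|t-s|>1$ more directly via the pointwise sup bound and $(\sum_k X_k^p)^{1/p}$.
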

\begin{proof}
Fix $n \in \mathbb{N}$ such that $n \leq \lfloor T \rfloor$. Let $t,s \in [n,n+1] \cap [0,T]$ and $x,y \in [0,1]$. Assume without loss of generality that $s \leq t$. We have that 
\begin{equation*}
\begin{split}
\mathbb{E} & \left[ |I_2^T(t,x)-I_2^T(s,y)|^p \right] \leq C_p \mathbb{E} \left[ \left| \int_s^t \int_0^1 e^{-\alpha(T-r)}G(t-r,x,z)\sigma(z,u(r,y)) W(\textrm{d} z,\textrm{d}r)\right|^p \right] \\& + C_p \mathbb{E} \left[ \left| \int_0^s \int_0^1 e^{-\alpha(T-r)}\sigma(z,u(r,z))(G(t-r,x,z)-G(s-r,x,z)) W(\textrm{d} z,\textrm{d}r)\right|^p \right] \\& + C_p \mathbb{E} \left[ \left| \int_0^s \int_0^1 e^{-\alpha(T-r)}\sigma(z,u(r,z))(G(s-r,x,z)-G(s-r,y,z)) W(\textrm{d} z,\textrm{d}r)\right|^p \right]. 
\end{split}
\end{equation*}
Applying Burkholder's inequality to each of these terms allows us to bound the right hand side by
\begin{equation*}
\begin{split}
 C_p &  \mathbb{E} \left[ \left| \int_s^t \int_0^1 e^{-2\alpha(T-r)}G(t-r,x,z)^2 \sigma^2(z,u(r,y)) \textrm{d}z \textrm{d}r \right|^{p/2} \;  \right] \\& + C_p \mathbb{E} \left[ \left| \int_0^s \int_0^1 e^{-2\alpha(T-r)}(G(t-r,x,z)-G(s-r,x,z))^2 \sigma^2(z,u(r,z)) \textrm{d}z \textrm{d}r\right|^{p/2} \;  \right] \\& + C_p \mathbb{E} \left[ \left| \int_0^s \int_0^1 e^{-2\alpha(T-r)}(G(s-r,x,z)-G(s-r,y,z))^2 \sigma^2(z,u(r,z)) \textrm{d}z \textrm{d}r\right|^{p/2} \;  \right]. 
\end{split}
\end{equation*}
We focus on the first of these terms and note that the arguments for the other two are essentially the same, the difference being in which inequality from Proposition \ref{Heat_Bound} we apply. Since $\sigma$ is bounded and $s,t \in [n,n+1]$, we have that $e^{-\alpha(T-r)} \sigma^2(z,u(r,z)) \leq \|\sigma\|_{\infty}^2 e^{-\alpha(T-(n+1))}$ for $(r,z) \in [0,t] \times [0,1]$. This gives that:
\begin{equation*}
\begin{split}
\mathbb{E} & \left[ \left| \int_s^t \int_0^1 e^{-2\alpha(T-r)}G^2(t-r,x,z)\sigma^2(z,u(r,y)) \textrm{d}z \textrm{d}r \right|^{p/2} \right] \\ & \leq C_{\sigma}e^{-\alpha p (T-(n+1))/2}  \left| \int_s^t \int_0^1 e^{-\alpha(T-r)}G^2(t-r,x,z)\textrm{d}z \textrm{d}r \right|^{p/2}. 
\end{split}
\end{equation*}
By H\"{o}lder's inequality we have that
\begin{equation*}
\begin{split}
& \left| \int_s^t \int_0^1 e^{-\alpha(T-r)}G(t-r,x,z)^2\textrm{d}z \textrm{d}r \right|^{p/2} 
\\ & \leq \int_s^t e^{-\alpha p (T-r)/2} \textrm{d}r \times \left[ \int_s^t \left( \int_0^1 G(t-r,x,z)^2 \textrm{d}z \right)^{p/(p-2)} \textrm{d}r \right]^{(p-2)/2} \\ & \leq \int_0^{\infty} e^{-\alpha p t/2} \textrm{d}t \times \left[ \int_s^t \left( \int_0^1 G(t-r,x,z)^2 \textrm{d}z \right)^{p/(p-2)} \textrm{d}r \right]^{(p-2)/2}.
\end{split}
\end{equation*}
By inequality (1) from Proposition \ref{Heat_Bound}, this is at most $C_{p,\alpha} |t-s|^{(p-4)/2}$. Arguing similarly for the other terms, we obtain that for $t,s \in [n,n+1] \cap [0,T]$ and $x,y \in [0,1]$
\begin{equation*}
\mathbb{E}\left[ |I_2^T(t,x)-I_2^T(s,y)|^p \right] \leq C_{p,\sigma}e^{-\alpha p (T-(n+1))/2}\left( |t-s|^{1/2} + |x-y| \right)^{(p-4)/2}.
\end{equation*}
Let $p$ be large enough so that $\gamma < (p-10)/p$. We can then apply Corollary A.3 from \cite{Dalang} to obtain that for $t,s \in [n,n+1] \cap [0,T]$ and $x,y \in [0,1]$
\begin{equation}\label{X_n}
|I_2^T(t,x)- I_2^T(s,y)| \leq X_n( |t-s|^{\gamma/4} + |x-y|^{\gamma/2}).
\end{equation}
almost surely, where $X_n$ is a positive random variable such that 
\begin{equation}\label{Holder_Xn}
\mathbb{E}\left[ X_n^p \right] \leq C_{\gamma,p,\sigma}e^{-\alpha p (T-(n+1))/2}.
\end{equation}
Now suppose that $0 \leq s \leq t \leq T$, $x,y \in [0,1]$, and that there exists $n < m \in \mathbb{N}$ such that $s \in [n,n+1]$ and $t \in [m,m+1]$. We then have that
\begin{equation}\label{I_2_Hold}
\begin{split}
|I_2^T(t,x)- I_2^T(s,y)|  \leq & |I_2^T(t,x) - I_2^T(t,y)| + |I_2^T(t,y) - I_2^T(m,y)| 
\\ & + \left( \sum\limits_{i = n +2}^m |I_2^T(i, y) - I_2^T(i-1,y) | \right) + |I_2^T(n+1,y) - I_2^T(s,y)|,
\end{split}
\end{equation}
where we use the convention that the sum is zero if $m = n+1$.
By applying (\ref{X_n}), we then obtain that (\ref{I_2_Hold}) is at most
\begin{equation}\label{I_2_Hold_2}
\begin{split}
 X_m |x-y|^{\gamma/2} + X_m |t-m|^{\gamma/4}+ \left( \sum\limits_{i = n +2}^m X_{i-1} \right) + X_n |(n+1) -s|^{\gamma/4}.
\end{split}
\end{equation}
Since $n <m$, we have that $|t-m| \leq |t-s|$ and $|(n+1) - s| \leq |t-s|$. In addition, if $m \geq n +2$, we have that $|t -s| \geq 1$. Therefore, (\ref{I_2_Hold_2}) can be bounded by
\begin{equation*}
\left(\sum\limits_{i = n}^m X_i \right) \times ( |t-s|^{\gamma/4} + |x-y|^{\gamma/2}) \leq \left( \sum\limits_{i =0}^{\lfloor T \rfloor} X_i \right) (|t-s|^{\gamma/4} + |x-y|^{\gamma/2} ).
\end{equation*}
Altogether we have shown that for any $s,t \in [0,T]$ and $x,y \in [0,1]$,
\begin{equation}\label{Holder_I2}
|I_2^T(t,x)- I_2^T(s,y)| \leq \left( \sum\limits_{i =0}^{\lfloor T \rfloor} X_i \right) (|t-s|^{\gamma/4} + |x-y|^{\gamma/2} ).
\end{equation}
Let $X:= \left( \sum\limits_{i =0}^{\lfloor T \rfloor} X_i \right)$. Calculating gives that
\begin{equation*}
\mathbb{E}\left[ X^P \right] = \mathbb{E}\left[ \left(\sum\limits_{n=0}^{\lfloor T \rfloor} X_n \right)^p \; \right] \leq \mathbb{E}\left[ \left( \sum\limits_{n=0}^{\lfloor T \rfloor} e^{\alpha p (T-(n+1))/4}X_n^p \right)  \right] \times \left( \sum\limits_{n=0}^{\lfloor T \rfloor} e^{-\alpha q (T-(n+1))/4} \right)^{p/q},
\end{equation*}
where $q=p/(p-1)$. By (\ref{Holder_Xn}), we obtain that this is at most
\begin{equation*}
C_{\gamma,p, \sigma} \left(\sum\limits_{n=0}^{\infty} e^{-\alpha p (n+1)/4}\right) \times \left( \sum\limits_{n=0}^{\infty} e^{-\alpha q (n+1)/4} \right)^{p/q}= C_{\gamma,p,\sigma,\alpha}<\infty.
\end{equation*}
Importantly, this is independent of $T$. By (\ref{Holder_I2}), we then have that 
\begin{equation*}
\mathbb{E}\left[  \sup\limits_{s,t \in [0,T], s \neq t} \sup\limits_{x, y \in [0,1], x \neq y}  \left(\frac{ |I_2^T(t,x)- I_2^T(s,y)|}{|t-s|^{\gamma /4} + |x-y|^{\gamma /2}}\right)^p \;  \right] \leq C_{\gamma,p,\sigma,\alpha}.
\end{equation*}
Taking the supremum over $T>0$ concludes the proof.
\end{proof}

\begin{cor}\label{infinity}
Let $I_2^T$ be as in Proposition \ref{stoch_bound}. For $p \geq 1$, we have
\begin{equation*}
\sup\limits_{T>0} \mathbb{E}\left[ \sup\limits_{t \in [0,T]} \sup\limits_{x \in [0,1]} |I_2^T(t,x)|^p \right] \leq C_{p,\sigma,\alpha}.
\end{equation*}
\end{cor}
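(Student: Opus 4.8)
The plan is to reduce $\sup_{t,x}|I_2^T(t,x)|$ to the random variable $X = \sum_{i=0}^{\lfloor T \rfloor} X_i$ already constructed in the proof of Proposition \ref{stoch_bound}, whose $p$-th moment is bounded uniformly in $T$. The starting point is the observation that $I_2^T(0,x) = 0$ for every $x \in [0,1]$, since the defining stochastic integral is then taken over the empty time interval. One might hope to conclude immediately from the global estimate (\ref{Holder_I2}) by setting $s = 0$ and $y = x$, but this produces the factor $|t|^{\gamma/4} \le T^{\gamma/4}$, which is not uniform in $T$; the growing time horizon is precisely the difficulty to be overcome.

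Instead, I would fix $t \in [0,T]$ and $x \in [0,1]$, set $m = \lfloor t \rfloor$, and telescope $I_2^T(\cdot, x)$ along the integer grid in time while holding the spatial variable fixed:
\[
I_2^T(t,x) = \big(I_2^T(t,x) - I_2^T(m,x)\big) + \sum_{i=1}^{m} \big( I_2^T(i,x) - I_2^T(i-1,x) \big),
\]
which collapses to $I_2^T(t,x)$ since $I_2^T(0,x)=0$. Each increment now lies within a single unit interval — the first within $[m,m+1]$ and the $i$-th within $[i-1,i]$ — and involves no spatial displacement, so the interval-wise bound (\ref{X_n}) applies with time increment at most $1$ and zero spatial increment. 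This gives $|I_2^T(t,x) - I_2^T(m,x)| \le X_m$ and $|I_2^T(i,x) - I_2^T(i-1,x)| \le X_{i-1}$, whence
\[
|I_2^T(t,x)| \le \sum_{i=0}^{m} X_i \le \sum_{i=0}^{\lfloor T \rfloor} X_i = X.
\]
Crucially, the right-hand side depends on neither $t$ nor $x$, so taking the supremum over $t \in [0,T]$ and $x \in [0,1]$ preserves the bound.

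It then remains to take the $p$-th moment and invoke the estimate $\sup_{T>0}\mathbb{E}[X^p] \le C_{\gamma,p,\sigma,\alpha}$ established in the proof of Proposition \ref{stoch_bound}; for small $p \ge 1$, one first passes to a larger exponent and applies Jensen's inequality on the probability space. The only genuine obstacle is the unboundedness of the time domain, and it is resolved not by Hölder continuity back to a fixed base point but by the unit-increment telescoping together with the exponential decay $\mathbb{E}[X_n^p] \le C e^{-\alpha p (T-(n+1))/2}$ from (\ref{Holder_Xn}), which is exactly what renders $\mathbb{E}[X^p]$ finite and uniform in $T$.
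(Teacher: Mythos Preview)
Your argument is correct, but the paper takes a different and shorter route. You anchor at the temporal origin, using $I_2^T(0,x)=0$, and then must telescope along the integer time grid to avoid the factor $T^{\gamma/4}$; this forces you back to the interval-wise random variables $X_n$ and the summability estimate on $X=\sum_{i=0}^{\lfloor T\rfloor} X_i$ already worked out inside Proposition~\ref{stoch_bound}. The paper instead anchors at the spatial boundary: since the Dirichlet heat kernel vanishes at $x=0$, one has $I_2^T(t,0)=0$ for every $t$, and hence
\[
|I_2^T(t,x)| = |I_2^T(t,x)-I_2^T(t,0)| \le \frac{|I_2^T(t,x)-I_2^T(t,0)|}{x^{1/4}} \le \sup_{s\neq t}\sup_{x\neq y}\frac{|I_2^T(t,x)-I_2^T(s,y)|}{|t-s|^{1/8}+|x-y|^{1/4}}.
\]
Because the spatial domain $[0,1]$ is bounded, this yields the uniform-in-$T$ bound directly from the conclusion of Proposition~\ref{stoch_bound} with $\gamma=1/2$, with no telescoping and no reuse of the $X_n$. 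Your approach buys nothing extra here, but it would be the natural one if the spatial domain were unbounded and the boundary trick unavailable.
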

\begin{proof}
Note that $I_2^T(t,0)=0$ for every $t \in [0,T]$ almost surely. Therefore, for $x \in (0,1]$,
\begin{equation*}
\begin{split}
|I_2^T(t,x)|= |I_2^T(t,x) - I_2^T(t,0)| & \leq \frac{|I_2^T(t,x) - I_2^T(t,0)|}{x^{1/4}} 
\\ & \leq \sup\limits_{s,t \in [0,T], s \neq t} \sup\limits_{x, y \in [0,1], x \neq y} \frac{ |I_2^T(t,x)- I_2^T(s,y)|}{|t-s|^{1/8} + |x-y|^{1/4}}.
\end{split}
\end{equation*}
By taking the supremum on the left hand side and then taking the $L^p(\Omega)$-norm, we see that the result follows from Proposition \ref{stoch_bound}.
\end{proof}

We are now in position to prove the main Theorem for this section. Throughout the proof, we will denote the infinity norm on $[0,T] \times [0,1]$ by $\| \cdot \|_{\infty, T}$. That is, for $f: [0,T] \times [0,1] \rightarrow \mathbb{R}$, we define
\begin{equation*}
\| f \|_{\infty, T} := \sup\limits_{ t \in [0,T]} \sup\limits_{ x \in [0,1]} |f(t,x)|.
\end{equation*}

\begin{proof}[Proof of Theorem \ref{Lp_Bound_Main}]
Fix $T>0$. Let $\tilde{u}$ be as in Proposition \ref{exp_SPDE}. Then $\tilde{u}$ solves the reflected SPDE (\ref{dec_PDE}). Let $\tilde{v}(t,x)$ solve the SPDE 
\begin{equation*}
\frac{\partial \tilde{v}}{\partial t}= \Delta \tilde{v} + \tilde{f}(t,x,\tilde{u}(t,x)) + \tilde{\sigma}(t,x,\tilde{u}(t,x)) \frac{\partial^2 W}{\partial x \partial t}.
\end{equation*}
We then have, by Theorem 1.4 in \cite{NP}, that $\|\tilde{u} \|_{\infty,T} \leq 2 \| \tilde{v} \|_{\infty,T}$ almost surely. Writing $\tilde{v}$ in mild form gives 
\begin{equation*}
\begin{split}
\tilde{v}(t,x)= & \int_0^t \int_0^1 G(t-s,x,y)\tilde{f}(s,y,\tilde{u}(s,y)) \textrm{d}y \textrm{d}s \\ & + \int_0^t \int_0^1 e^{-\alpha(T-s)}G(t-s,x,y)\sigma(s,y,\tilde{u}(s,y)) W(\textrm{d}y\textrm{d}s)=:I_1^T(t,x) + I_2^T(t,x) .
\end{split}
\end{equation*}
It follows that 
\begin{equation}\label{Lp}
\mathbb{E}\left[ \|\tilde{u}\|_{\infty,T}^p \right] \leq C_p \left( \mathbb{E}\left[ \|I_1^T\|_{\infty,T}^p \right] + \mathbb{E}\left[ \|I_2^T\|_{\infty,T}^p \right] \right).
\end{equation}
Bounding the $I_1^T$ term, we obtain by applying Proposition \ref{Heat}:
\begin{equation*}
\begin{split}
\left| I_1^T(t,x) \right| = & \left| \int_0^t \int_0^1 G(t-s,x,y) \tilde{f}(s,y,\tilde{u}(s,y)) \; \textrm{d}y \textrm{d}s \right| \\ & \leq \alpha \left| \int_0^t \int_0^1 G(t-s,x,y) \tilde{u}(s,y)  \; \textrm{d}y \textrm{d}s \right| + C_f \left| \int_0^t \int_0^1 G(t-s,x,y)  \; \textrm{d}y \textrm{d}s \right| \\ & \leq \frac{\alpha \|\tilde{u}\|_{\infty,T} + C_f}{3}.
\end{split}
\end{equation*}
This gives that 
\begin{equation*}
\mathbb{E}\left[ \|I_1^T\|_{\infty,T}^p \right] \leq C_p \frac{\alpha^p \mathbb{E}\left[ \|\tilde{u}\|_{\infty,T}^p \right] }{3^p} + C_p \frac{C_f^p}{3^p}.
\end{equation*}
Applying Corollary \ref{infinity} and using the inequality (\ref{Lp}), we obtain that 
\begin{equation}\label{r}
\mathbb{E}\left[ \|\tilde{u}\|_{\infty,T}^p \right] \leq  C_{p,\sigma,\alpha,f} + \frac{\alpha^p}{3^p}C_p \; \mathbb{E}\left[ \|\tilde{u}\|_{\infty,T}^p \right].
\end{equation}
Choosing $\alpha$ to be sufficiently small, noting that the result for larger values of $\alpha$ follows from the result for smaller $\alpha$, simple rearrangement of (\ref{r}) gives that 
\begin{equation*}
\mathbb{E}\left[ \|\tilde{u}\|_{\infty,T}^p \right] \leq \tilde{C}_{p,\sigma,\alpha,f}. 
\end{equation*}
Since this bound is independent of $T$, we have the result.
\end{proof}

\section{Tightness of the Sequence $(u(t,\cdot))_{t \geq 0}$ and Proof of Theorem \ref{main}}

We recall that, by Arzela-Ascoli Theorem, the relatively compact sets in $C_0(0,1)$ are those which are equicontinuous. It follows that collections of functions for which we can uniformly bound some H\"{o}lder norm are relatively compact. Therefore, in order to prove tightness of $(u(t,\cdot))_{t \geq 0}$, it is enough to show that
\begin{equation*}
\sup\limits_{T \geq 0} \mathbb{E}\left[ \sup\limits_{x,y \in [0,1],x \neq y} \frac{|u(T,x)-u(T,y)|}{|x-y|^{\alpha}} \right] < \infty
\end{equation*}
for some $\alpha>0$. To show this, we use estimate (\ref{Lp_bound}) and follow the work of Dalang and Zhang in \cite{Dalang2}, in which the authors prove H\"{o}lder continuity for reflected SDPEs. Since the supremum over $T$ appears outside the expectation, we can apply the reasoning from Zhang \cite{Dalang2} to $\tilde{u}(t,x)=e^{-\alpha(T-t)}u(t,x)$ for each $T>0$, and then take the supremum over $T$.

\begin{proof}[Proof of Theorem \ref{main}]
Let $T>0$ and define $\tilde{u}(t,x):= e^{-\alpha(T-t)}u(t,x)$. By Proposition \ref{exp_SPDE}, we have that $(\tilde{u}, \tilde{\eta})$ solves the reflected SPDE (\ref{dec_PDE}). Define $\tilde{v}$ as the solution to the SPDE
\begin{equation*}
\frac{\partial \tilde{v}}{\partial t}= \Delta \tilde{v} + \tilde{f}(t,x,\tilde{u}(t,x)) + \tilde{\sigma}(t,x,\tilde{u}(t,x)) \frac{\partial^2 W}{\partial x \partial t},
\end{equation*}
with Dirichlet boundary conditions $v(t,0)=v(t,1)=0$ and zero initial data. We now examine the H\"{o}lder continuity of $\tilde{v}$. Writing $\tilde{v}$ in mild form, we have that
\begin{equation*}
\begin{split}
\tilde{v}(t,x)= & \int_0^t \int_0^1 G(t-s,x,y)\tilde{f}(s,y,\tilde{u}(s,y)) \textrm{d}y \textrm{d}s \\ & + \int_0^t \int_0^1 G(t-s,x,y)\tilde{\sigma}(s,y,\tilde{u}(s,y)) W(\textrm{d}y, \textrm{d}s)=: I_1^T(t,x)+ I_2^T(t,x).
\end{split}
\end{equation*}
Fix $\gamma \in (0,1)$. By Proposition \ref{stoch_bound}, we have that for $p \geq 1$ there exists $X^T \in L^p(\Omega)$ such that, for every $s,t \in [0,T]$ and every $x,y \in [0,1]$
\begin{equation}\label{I2H}
|I_2^T(t,x)-I_2^T(s,y)| \leq X^T ( |t-s|^{\gamma/4} + |x-y|^{\gamma/2}) 
\end{equation}
almost surely, with the following uniform bound on the $X^T$:
\begin{equation*}
\sup\limits_{T>0} \mathbb{E}\left[ |X^T|^p \right]< \infty. 
\end{equation*}
We now control the H\"{o}lder norm of $I_1^T$. We have that
\begin{equation*}
\begin{split}
|I_1^T(t,x)-I_1^T(s,y)| \leq & \left |\int_s^t \int_0^1 G(t-r,x,z)\tilde{f}(r,z,\tilde{u}(r,z)) \textrm{d}z \textrm{d}r \right| \\ & + \left |\int_0^s \int_0^1 (G(t-r,x,z)-G(s-r,x,z))\tilde{f}(r,z,\tilde{u}(r,z)) \textrm{d}z \textrm{d}r \right| \\ & + \left |\int_0^s \int_0^1 (G(s-r,x,z)-G(s-r,y,z))\tilde{f}(r,z,\tilde{u}(r,z)) \textrm{d}z \textrm{d}r \right|.
\end{split}
\end{equation*}
For the first of these terms, we note that for $n \in \mathbb{N}$ and $\tilde{s}, \tilde{t} \in [n,n+1]$, we have that 
\begin{equation}\label{rand}
\begin{split}
& \left |\int_{\tilde{s}}^{\tilde{t}} \int_0^1 G(\tilde{t}-r,x,z)\tilde{f}(r,z,\tilde{u}(r,z)) \textrm{d}z \textrm{d}r \right|  \\ & \leq  (C_f + \alpha) \left |\int_{\tilde{s}}^{\tilde{t}} \int_0^1 G(\tilde{t}-r,x,z) \textrm{d}z \textrm{d}r \right| e^{-\alpha/2 (T-(n+1))}\sup\limits_{ r \in [0,T] } \sup\limits_{z \in [0,1]} \left[ 1+ e^{\alpha /2 (T-r)}\tilde{u}(r,z) \right],
\end{split}
\end{equation}
where we make use of the bound 
\begin{equation*}
\begin{split}
|\tilde{f}(r,z,\tilde{u}(r,x)| & \leq C_fe^{-\alpha(T-r)} + \alpha \tilde{u}(r,x) \\ & \leq (C_f + \alpha)e^{-\alpha/2 (T-r)}( 1+ e^{\alpha /2 (T-r)}\tilde{u}(r,z)).
\end{split}
\end{equation*}
This then gives that, for $p$ large enough so that $(p-4)/p > \gamma$, 
\begin{equation*}
\begin{split}
& \left |\int_{\tilde{s}}^{\tilde{t}} \int_0^1 G(\tilde{t}-r,x,z)\tilde{f}(r,z,\tilde{u}(r,z)) \textrm{d}z \textrm{d}r \right| \leq   C_{f,\alpha} Y^T e^{-\alpha/2 (T-(n+1))}\left |\int_{\tilde{s}}^{\tilde{t}} \int_0^1 G(\tilde{t}-r,x,z) \textrm{d}z \textrm{d}r \right| \\ & \leq  C_{f,\alpha} Y^T e^{-\alpha/2 (T-(n+1))}\left( \int_{\tilde{s}}^{\tilde{t}} \left[\int_0^1 G(\tilde{t}-r,x,z) \textrm{d}z \right]^{2p/(p-2)}\textrm{d}r  \right)^{(p-2)/2p} \\ &  \leq  C_{f,\alpha} Y^T e^{-\alpha/2 (T-(n+1))} \left( \int_{\tilde{s}}^{\tilde{t}} \left[\int_0^1 G(\tilde{t}-r,x,z)^2 \textrm{d}z \right]^{p/(p-2)}\textrm{d}r \right)^{(p-2)/2p} \\ & \leq C_{f, \alpha, p} Y^T e^{-\alpha/2 (T-(n+1))}  |\tilde{t} - \tilde{s}|^{p-4/4p} \\ & \leq C_{f, \alpha, p} Y^T e^{-\alpha/2 (T-(n+1))}  |\tilde{t} - \tilde{s}|^{\gamma/4},
\end{split}
\end{equation*}
where $\sup\limits_{T \geq 0} \mathbb{E}\left[ (Y^T)^p \right] < \infty$. For a general $s,t \in [0,T]$ we then have that 
\begin{equation*}
\begin{split}
\left |\int_s^t \int_0^1 G(t-r,x,z)\tilde{f}(r,z,\tilde{u}(r,z)) \textrm{d}z \textrm{d}r \right| \leq & C_{f, \alpha, p} Y^T\left[ \sum\limits_{n=0}^{\lfloor T \rfloor} e^{-\alpha/2 (T-(n+1))}   \right]  |t-s|^{\gamma/4} 
\\ \leq & C_{f, \alpha, p} Y^T\left[ \sum\limits_{n=0}^{\infty} e^{-\alpha/2 (n+1)}   \right]  |t-s|^{\gamma/4}  
\\ = & C_{f, \alpha, p} Y^T|t-s|^{\gamma/4}.
\end{split}
\end{equation*}
 Arguing in the same way and applying the other estimates from Proposition \ref{Heat_Bound}, we obtain that for every $t,s \in [0,T]$ and every $x,y \in [0,1]$
\begin{equation}\label{I1H}
|I_1^T(t,x)-I_1^T(s,y)| \leq C_{f, \alpha, p} Y^T ( |t-s|^{\gamma/4} + |x-y|^{\gamma/2}) 
\end{equation}
almost surely, where $\sup\limits_{T>0}\mathbb{E}\left[ |Y^T|^p \right]<\infty$. Setting $Z^T= X^T +C_{f, \alpha, p} Y^T$, we have from (\ref{I2H}) and (\ref{I1H}) that $Z^T$ bounds the $(\gamma/4, \gamma/2)$-H\"{o}lder norm of $\tilde{v}$. In the proof of Theorem 3.3 in \cite{Dalang2} and Theorem 3.16 in \cite{Paper}, it is shown that the H\"{o}lder norm of $\tilde{v}$ controls the H\"{o}lder norm of $\tilde{u}$. More precisely, we have that 
\begin{equation*}
|\tilde{u}(t,x)- \tilde{u}(s,y)| \leq C_{\gamma} Z^T ( |t-s|^{\gamma/4} + |x-y|^{\gamma/2})
\end{equation*}
for every $t,s \in [0,T]$ and every $x,y \in [0,1]$ almost surely. In particular, we obtain
\begin{equation*}
\mathbb{E}\left[ \sup\limits_{x,y \in [0,1],x \neq y} \frac{|u(T,x)-u(T,y)|^p}{|x-y|^{\gamma p /2}} \right]=  \mathbb{E}\left[ \sup\limits_{x,y \in [0,1],x \neq y} \frac{|\tilde{u}(T,x)-\tilde{u}(T,y)|^p}{|x-y|^{\gamma p/2}} \right] \leq C_{\gamma}^p \mathbb{E}\left[|Z^T|^p \right].
\end{equation*}
Noting that $\sup\limits_{T>0} \mathbb{E}\left[ |Z^T|^p \right] < \infty$ concludes the proof.
\end{proof}

\textbf{Acknowledgements.} 
The author would like to thank Ben Hambly for his guidance and discussions, and Imanol P\'{e}rez for his careful proofreading. This research was supported by EPSRC (EP/L015811/1).

\end{document}